\documentclass{amsart}
\usepackage{amssymb, amsmath, verbatim}
\usepackage[all]{xy}
\newcommand{\itz}{\begin{itemize}}
\newcommand{\zti}{\end{itemize}}
\newcommand{\enu}{\begin{enumerate}}
\newcommand{\une}{\end{enumerate}}
\newcommand{\ie}{{\it i.e.},}

\newcommand{\om}{\omega}

\newcommand{\eps}{\varepsilon}
\newcommand{\naturals}{\mathbb{N}}

\newcommand{\inv}[1]{#1^{-1}}

\newcommand{\trileq}{\trianglelefteq}

\newcommand{\mc}[1]{\mathcal{#1}}

\newcommand{\sark}{\v Sarkovski\u\i}
\newcommand{\tand}{\text{ and }}
\newcommand{\tor}{\text{ or }}
\newcommand{\tcase}{\text{case }}
\newcommand{\self}[2]{#1\colon#2\rightarrow #2}

\newcommand{\bo}{\trileq} 

\newcommand{\divides}{\mid}
\newcommand{\notdivides}{\nmid}
\newcommand{\covers}[1]{\supset_{#1}}
\newcommand{\closure}[1]{\overline{#1}}
\DeclareMathOperator{\diam}{diam}
\begin{document}

\theoremstyle{plain}
\newtheorem{theorem}{Theorem}
\newtheorem{lemma}{Lemma}
\newtheorem{proposition}{Proposition}
\newtheorem{corollary}{Corollary}

\theoremstyle{definition}
\newtheorem{definition}{Definition}
\newtheorem{example}{Example}
\newtheorem{exercise}{Exercise}

\theoremstyle{remark}
\newtheorem{remark}{Remark}
\newtheorem{conjecture}{Conjecture}
\newtheorem{problem}{Problem}
\newtheorem*{claim}{Claim}

\title{Chaos and periodicity on star graphs}
\author{Jorge L. Guerrero}
\email{agntp123@dusty.tamiu.edu}
\address{
Dept. of Mathematics and Physics, 
Texas A{\&}M International University, 
5201 University Blvd., Laredo, TX 78041, USA
}

\author{David Milovich}
\email{david.milovich@tamiu.edu}
\urladdr{http://dkmj.org/academic}
\address{
Dept. of Mathematics and Physics, 
Texas A{\&}M International University, 
5201 University Blvd., Laredo, TX 78041, USA
}
\date{\today}
\begin{abstract}
  For a continuous self-map of a star graph
  to be Li-Yorke chaotic
  and to have full periodicity,
  we prove some new sufficient conditions
  on the orbit of the center.
\end{abstract}
\subjclass[2010]{Primary: 37E25, 37E15}
\keywords{Li-Yorke, chaotic, period, \sark, triod, n-od, n-star, star graph}

\maketitle

\section{Introduction and main results}

By the \emph{$n$-od}, we mean a topological space
$X_n$ that is homeomorphic to the star graph of order $n$,
also known as the \emph{$n$-star} $S_n$.
The \emph{triod} is $X_3$, which is also known
as the simple dendrite or as $Y$.
The \emph{center} of $X_n$ is its vertex of order $n$,
which we denote by $o$.
A \emph{proper branch} of $X_n$ is
a connected component of $X_n\setminus\{o\}$;
fix an enumeration $\beta_1,\ldots,\beta_n$
of these proper branches.
A \emph{branch} of $X_n$ is the closure of a proper branch.

The original motivation for our results
was to find a new generalization to the triod
of Li and Yorke's ``Period three implies chaos''
for the interval, and to avoid the
uninteresting case of maps $\self{f}{X_3}$
of the form $\iota\circ g\circ r$ where
$r$ is a retraction of $X_3$ to $[0,1]$,
$\iota$ is its unique right inverse, and
$\self{g}{[0,1]}$.
As a special case of Corollary~\ref{fromperiod}
below, we meet this goal: if $\self{f}{X_3}$ and
the orbit of $o$ intersects each proper branch exactly once,
then $f$ is Li-Yorke chaotic and has full periodicity.
(We assume all maps are continuous.)

\begin{theorem}\label{allperiods}
  If $\self{f}{X_n}$ and
  $f^3(o)$ is not on the same branch as $f(o)$,
  then $f$ has points of all periods.
\end{theorem}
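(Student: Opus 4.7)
The plan is to imitate the \v{S}tefan-horseshoe proof of ``period three implies chaos'' by translating the hypothesis on the orbit of $o$ into arc-covering relations in $X_n$. Set $a = f(o)$, $b = f^2(o)$, $c = f^3(o)$; by hypothesis we may label the branches so that $a \in \beta_1$ and $c \in \beta_2$. First dispose of the degenerate possibilities $f^i(o) = o$ for $i \in \{1,2,3\}$, each of which would place $a$ and $c$ on a common branch. Let $A = [o,a] \subseteq \overline{\beta_1}$ and $C = [o,c] \subseteq \overline{\beta_2}$; these arcs meet only at $o$, so $A \cup C$ is itself an arc.

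The main step is to show that for some $k \in \{1,2,3\}$ one has $f^k(A) \supseteq A \cup C$. The argument is a case analysis on the branch containing $b$, using repeatedly that (i) the image of an arc under any iterate of $f$ is connected, and (ii) any connected subset of $X_n$ meeting two distinct proper branches contains $o$ and therefore contains the entire sub-arc from $o$ to each of its points on those branches. When $b$ lies on $\beta_2$ beyond $c$, the image $f(A)$ itself already covers $A \cup C$. When $b$ lies on a branch distinct from $\overline{\beta_1}$, or on $\overline{\beta_2}$ between $o$ and $c$, one applies the same connectivity observation to $f([o,b]) \subseteq f(A)$ to extract $f^2(A) \supseteq A \cup C$. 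The remaining case $b \in \overline{\beta_1}$ is handled by iterating once more: $f^2(A)$ contains $b \in \overline{\beta_1}$ and $c \in \beta_2$, hence contains $o$ and a sub-arc $[o, b']$ with $f([o, b']) \supseteq A \cup C$, giving $f^3(A) \supseteq A \cup C$.

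From $f^k(A) \supseteq A \cup C$ and the intermediate value principle along the arc $A$, extract disjoint sub-arcs $A_0, A_1 \subseteq A$ with $f^k(A_0) \supseteq A$ and $f^k(A_1) \supseteq C$. Combined with a return covering $f^{k'}(C') \supseteq A$ for a suitable sub-arc $C' \subseteq C$, this assembles a Markov-style covering graph on a pair of arcs with a self-loop, to which the standard arc-covering lemma applies: a cycle $I_0 \to I_1 \to \cdots \to I_{m-1} \to I_0$ in the covering graph yields a periodic point of $f$ of period dividing $m$, and cycles chosen not to decompose into shorter ones force minimal period exactly $m$. Constructing such a cycle of every length $m \geq 1$, in the same way as in the classical interval argument, then gives points of all periods.

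The hard part is the return covering from $C$ into $A$: the hypothesis controls $f^3(o)$ but says nothing about $f(c) = f^4(o)$, so one cannot simply rerun the first step with the roles of $A$ and $C$ reversed. Instead the return covering must be produced internally from the horseshoe already built on $A$, by taking the sub-arc of $A_1$ whose $f^k$-image is $C$ and pushing it one step further, and then doing careful bookkeeping to ensure the sub-arcs assembled into length-$m$ cycles are disjoint enough to force minimal (not merely dividing) period $m$. A secondary subtlety is that Step~1 may require $k \in \{2,3\}$ rather than $k = 1$, so the argument must either recover coverings for $f$ itself from those for $f^k$, or arrange the horseshoe construction to yield every integer period directly; this, I expect, is where the most delicate case work will live.
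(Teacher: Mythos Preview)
Your outline has a genuine gap at exactly the point you flag as ``the hard part'': the return covering from $C=[o,f^3(o)]$ back to $A=[o,f(o)]$. The hypothesis gives no control over $f^4(o)=f(c)$, so $f(C)$ is only known to contain $[f(o),f^4(o)]$, which need not contain $o$ and hence need not cover $A$. Your proposed workaround---taking the sub-arc $A_1\subseteq A$ with $f^k(A_1)\supseteq C$ and ``pushing it one step further''---lands you in $f(C)$, which is precisely the set you cannot control. What you actually have is only a one-sided horseshoe for $f^k$: $A_0$ $f^k$-covers both $A_0$ and $A_1$, but $A_1$ $f^k$-covers only $C$, with no edge back. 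That covering graph produces no cycles except the self-loop at $A_0$, so it yields nothing beyond a fixed point of $f^k$.

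The paper sidesteps both this gap and your ``secondary subtlety'' about $k\neq 1$ with one stroke: it never uses $f^3(o)$ as an interval endpoint. Instead it does a three-way case split on the position of $f^2(o)$ and in each case chooses two $f$-basic intervals $A,B$ with endpoints drawn from $\{o,f(o),f^2(o)\}$. Because the endpoints are at most second iterates of $o$, their $f$-images are at most third iterates, which \emph{are} controlled by the hypothesis. In every case one gets, for $f$ itself,
\[
A\covers{f}A,\qquad A\covers{f}B,\qquad B\covers{f}A,
\]
i.e.\ a genuine two-interval horseshoe with a self-loop. For example, when $f^2(o)\notin\closure{\beta_1}$ the choice is $A=[o,f(o)]$ and $B=[o,f^2(o)]$ (note: $f^2(o)$, not $f^3(o)$), and the return $B\covers{f}A$ holds because $f(B)\supseteq[f(o),f^3(o)]\ni o$. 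From this horseshoe the paper's Lemma~\ref{basicperiods} immediately gives all periods $\geq 2$; period $1$ follows since $X_n$ is a dendroid. The moral: replace your $C$ by an interval whose $f$-image you can compute from the hypothesis, and the iterate bookkeeping disappears.
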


\begin{theorem}\label{scramble}
  If $\self{f}{X_n}$ and
  $f^3(o)$ is not on the same branch as $f(o)$,
  then $f$ scrambles an uncountable set.
\end{theorem}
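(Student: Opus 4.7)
The plan is to parallel the proof of Theorem~\ref{allperiods}: the hypothesis on the orbit of $o$ should yield a pair of arcs $A,B\subset X_n$ forming a ``horseshoe'' for some iterate $f^k$, meaning $f^k(A)\supseteq A\cup B$ and $f^k(B)\supseteq A$. From such a horseshoe the classical Li-Yorke construction delivers an uncountable scrambled set. Since the hypothesis here coincides with that of Theorem~\ref{allperiods}, whatever case analysis or arc-covering lemma was used there can be reused to locate the horseshoe. Letting $\beta_i$ and $\beta_j$ be the distinct branches containing $f(o)$ and $f^3(o)$ respectively, the arc $A=[o,f(o)]$ together with a subarc $B$ of either $\beta_j$ or $[o,f^2(o)]$ are the natural candidates.

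With such arcs in hand, the construction of the scrambled set is standard. For every $\sigma\in\{A,B\}^\naturals$ one finds (via nested preimages along arcs, using the intermediate value theorem) a point $x_\sigma\in A$ whose $f^k$-itinerary realizes $\sigma$. Selecting an uncountable family $\mc F\subset\{A,B\}^\naturals$ whose members pairwise agree on infinitely many coordinates and disagree on infinitely many coordinates yields a family $\{x_\sigma:\sigma\in\mc F\}$ satisfying $\liminf_n d(f^n(x_\sigma),f^n(x_\tau))=0$ and $\limsup_n d(f^n(x_\sigma),f^n(x_\tau))\geq\eps$, with $\eps>0$ depending only on the diameters of $A$ and $B$. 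A small refinement that avoids eventually periodic orbits ensures the set is genuinely scrambled (and uncountable).

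The main obstacle is, as in Theorem~\ref{allperiods}, translating the orbit-of-$o$ hypothesis into a clean arc-covering statement on a space with a branching vertex. On the interval the linear order trivializes the ``maps over'' relation, but on $X_n$ one must carefully account for how $f$ can stretch an arc across $o$ and into several branches simultaneously. The key observation I expect to use is that any arc whose image has points in two different proper branches must cover $o$ and stretch across, so the arc $[o,f(o)]$ has $f^2$-image reaching into the branch of $f^3(o)$ and hence covers $A$; a second, shorter arc near $o$ in the $f^3(o)$ branch then completes the horseshoe. This combinatorial bookkeeping, once pinned down, is what makes the passage to Li-Yorke chaos nontrivial in the star-graph setting.
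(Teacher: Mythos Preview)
Your plan to extract a two-arc horseshoe from the same case analysis as Theorem~\ref{allperiods} is sound, and indeed the paper proceeds through that same case split. The gap is in the passage from the horseshoe to a scrambled set. You assert that choosing itineraries which ``pairwise agree on infinitely many coordinates'' yields $\liminf_n d(f^n(x_\sigma),f^n(x_\tau))=0$, but knowing only that $f^{ki}(x_\sigma)$ and $f^{ki}(x_\tau)$ both lie in the same arc at infinitely many times $i$ gives no bound on their distance \emph{within} that arc; Lemma~\ref{shrink} produces nested preimage arcs $Q_i$, but nothing forces $\diam(Q_i)\to 0$. Without an expansion or contraction estimate on cylinder sets, the liminf can stay positive. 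There is a parallel problem on the $\limsup$ side: your candidate arcs $A=[o,f(o)]$ and $B$ share the endpoint $o$, so membership in distinct arcs does not by itself give a uniform separation $\eps>0$ ``depending only on the diameters of $A$ and $B$.'' One could close the gap by invoking the theorem that positive topological entropy implies Li--Yorke chaos, but that is far heavier machinery than what you describe.

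The paper supplies the missing mechanism in Lemma~\ref{genscramble}. From the ordering $f(v)<u<v\le f(u)$ (which the three-case analysis delivers), it builds an explicit monotone sequence $x_1<x_3<x_5<\cdots<x_4<x_2<x_0$ in $[u,v]$ converging to a $2$-cycle $\{a,b\}$, and thereby a family of intervals $A_j$ with $\diam(A_j)\to 0$. The symbolic cascades defining the points $y_r$ are arranged so that at synchronized times both $y_r$ and $y_s$ are forced into the same $A_{2k}$; it is this shrinking, not mere agreement of coarse $\{A,B\}$-itineraries, that yields $\liminf=0$. The $\limsup$ bound is then obtained by a uniform-continuity argument using that $b$ lies in the interior of $[u,v]$, together with the auxiliary loop $B_0,\ldots,B_p$ which pushes one orbit outside $(u,v)$ infinitely often while the other stays near $b$.
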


Here $S\subset X_n$ is \emph{scrambled}~\cite{ly}
by $\self{f}{X_n}$ if, for all distinct $p,q\in S$,
\[\liminf_{i\rightarrow\infty} d(f^i(p),f^i(q))=0
<\limsup_{i\rightarrow\infty} d(f^i(p),f^i(q))\]
where $d$ is a metric compatible with the topology of $X_n$.
Because $X_n$ is compact, whether $S$ is scrambled or not
does not depend on $d$:
the identity map from $(X_n,d_1)$ to $(X_n,d_2)$
is uniformly continuous for all pairs $(d_1,d_2)$
of compatible metrics.
$\self{f}{X_n}$ is called \emph{Li-Yorke} chaotic
if it scrambles an uncountable set.

Theorems~\ref{allperiods} and \ref{scramble}
are proved in section~\ref{proofs}.
The proof of Theorem~\ref{scramble}
mainly uses ideas from Li and Yorke's 
scrambled set construction~\cite{ly}.
The proof of Theorem~\ref{allperiods} leans more
heavily on techniques involving
``basic intervals'' similar to Baldwin's~\cite{b}.

\begin{corollary}\label{fromperiod}
  If $n\ge 2$, $\self{f}{X_n}$, and the orbit of $o$
  has size $n+1$ and intersects every proper branch,
  then $f$ is Li-Yorke chaotic and has full periodicity.
\end{corollary}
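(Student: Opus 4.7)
The plan is to deduce Corollary~\ref{fromperiod} from Theorems~\ref{allperiods} and~\ref{scramble} by verifying their common hypothesis: that $f^3(o)$ is not on the same proper branch as $f(o)$. I will first extract structural information about the orbit from the counting assumption, and then rule out the obstructive configuration $f^3(o)=f(o)$.

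The orbit $\{f^i(o):i\ge 0\}$ has $n+1$ distinct points, one of which is $o$ itself; the remaining $n$ points lie in $X_n\setminus\{o\}=\beta_1\cup\cdots\cup\beta_n$. Since the orbit meets every proper branch and exactly $n$ non-center orbit points must be distributed among the $n$ proper branches, pigeonhole forces each $\beta_j$ to contain exactly one orbit point.

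Let $\beta_i$ be the unique proper branch containing $f(o)$. Then $f^3(o)$ is also an orbit point, so either $f^3(o)=o$ (which lies in no proper branch) or $f^3(o)$ is the unique orbit point of some $\beta_j$; in the latter case $f^3(o)\in\beta_i$ iff $f^3(o)=f(o)$. Suppose toward contradiction that $f^3(o)=f(o)$. Then the forward orbit of $f(o)$ is contained in $\{f(o),f^2(o)\}$, so the orbit of $o$ has at most $3$ points. For $n\ge 3$ this contradicts orbit size $n+1\ge 4$, so $f^3(o)\ne f(o)$, hence $f^3(o)\notin\beta_i$. Theorems~\ref{allperiods} and~\ref{scramble} then yield full periodicity and an uncountable scrambled set, giving Li-Yorke chaos.

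The step I expect to require the most care is the borderline case $n=2$, where the size bound $3$ equals the orbit size and so does not immediately exclude $f^3(o)=f(o)$. Here one would need to rule out the configuration in which $f(o)$ and $f^2(o)$ form a two-cycle with $o$ as a pre-periodic predecessor, either by a continuity argument specific to the arc $X_2$ or by a direct appeal to Sharkovsky's theorem for interval maps. For $n\ge 3$ the argument above is essentially immediate from the two theorems.
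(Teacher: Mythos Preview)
Your approach---verify the common hypothesis of Theorems~\ref{allperiods} and~\ref{scramble}---is exactly the paper's implicit argument, and for $n\ge 3$ it is essentially correct.

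There is one slip worth fixing. The theorems require that $f^3(o)$ not lie on the same \emph{branch} as $f(o)$, and a branch is by definition the closure $\overline{\beta_i}$; in particular $o\in\overline{\beta_i}$ for every $i$. So the sub-case $f^3(o)=o$ does \emph{not} satisfy the hypothesis, contrary to your parenthetical remark that $o$ ``lies in no proper branch.'' For $n\ge 3$ this is harmless, since the same orbit-size count that rules out $f^3(o)=f(o)$ also rules out $f^3(o)=o$: either would force the orbit of $o$ to have at most three points, contradicting $n+1\ge 4$. You should simply record that $o,f(o),f^2(o),f^3(o)$ are four distinct orbit points and conclude $f^3(o)\notin\overline{\beta_i}$ directly.

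For $n=2$ your diagnosis is off in both directions. If $o$ has period $3$ then $f^3(o)=o$ lies on every branch and Theorems~\ref{allperiods} and~\ref{scramble} do not apply; the conclusion must come instead from the classical Li--Yorke and \v{S}arkovski\u\i\ theorems for interval maps. Conversely, the pre-periodic configuration $f^3(o)=f(o)$ that you hope to ``rule out'' is \emph{not} excluded by continuity, and in fact it falsifies the corollary as literally stated: on $[0,1]$ with $o=\tfrac12$, take $f\equiv\tfrac34$ on $[0,\tfrac14]$, $f(x)=\tfrac54-2x$ on $[\tfrac14,\tfrac12]$, and $f\equiv\tfrac14$ on $[\tfrac12,1]$. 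Then the orbit of $o$ is $\{\tfrac12,\tfrac14,\tfrac34\}$ and meets both proper branches, yet $f^2$ is monotone non-decreasing on $[\tfrac14,\tfrac34]$, so $f$ has only periods $1$ and $2$ and is not Li--Yorke chaotic. The paper's later usage (writing ``$o$ has period $5$'' interchangeably with ``orbit of size $5$'') indicates that ``orbit of size $n+1$'' is meant as $o$ periodic of period $n+1$; under that reading the $n=2$ case is exactly period three on the interval, and no separate continuity argument is needed.
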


For comparison, Alsed\`a and Moreno~\cite{am} proved
that, for an arbitrary $\self{f}{X_3}$, if
the periodicity of $f$ does not contain $\{2,3,4,5,7\}$,
then $f$ may not have full periodicity.
(By periodicity of $f$, we mean
the set of all $f$-periods of points in $X_3$.)
If $n=3$ in Corollary~\ref{fromperiod}, then
period 4 for an ``interesting'' orbit of the center
implies full periodicty. In section 2, we 
compare Corollary~\ref{fromperiod} to Baldwin's
characterizations of periodicity sets of self maps of $X_n$.

In section~\ref{extensions}, we show that
``$n+2$'' can replace ``$n+1$'' in Corollary~\ref{fromperiod}
at the cost of assuming $n\ge 3$ and
weakening ``full periodicity'' to ``all periods except 3.''
We show by example that period 3 can indeed be avoided.
We also give an example showing that 
all odd periods $\ge 3$ can be avoided
if ``$n+3$'' replaces ``$n+1$.''

\section{Relation to Baldwin's characterization}
Baldwin~\cite{b} defines, given a topological space $X$,
a preorder (\ie\ transitive and reflexive relation)
$\le_X$ of $\naturals$ by $p\le_X q$ iff every $\self{f}{X}$
with a point of period $q$ also has a point of period $p$.
When $X$ is the $n$-od, this preorder is also a partial order
(\ie\ is antisymmetric) and is characterized in~\cite{b}
by ${\le_{X_n}}=\bigcap_{t\le n}{\bo_t}$
where each $\bo_t$ is a partial ordering defined below.
Baldwin actually proves something stronger, that if
$\self{f}{X_n}$, then the set of $f$-periods is a finite
union of sets each a $\bo_t$-initial segment for some $t\le n$.

First, $\bo_1$ is the \sark\ linear ordering defined by
$2^i(2a+1)\bo_1 2^j(2b+1)$ iff
\begin{itemize}
\item $a=0=b$ and $i\le j$,
\item $a=0<b$,
\item $0<a,b$ and $i>j$, or
\item $0<b<a$ and $i=j$,
\end{itemize}
for all $a,b,i,j\ge 0$.
($(\naturals,{\bo_1})$ has order type $\om+(\om^*)^2$.)
Second, given $n>1$ and $m,k\ge 1$:
\[m\bo_n k\Leftrightarrow
\begin{cases}
  \tcase k=1:& m=1\\
  \tcase n\divides k:& m=1\tor n\divides m\tand m/n\bo_1 k/n\\
  \tcase n\notdivides k\not=1:& m\in\{1,k\}\cup\{ik+jn:i\ge 0\tand j\ge 1\}\\
\end{cases}
\]
($(\naturals,{\bo_n})$ is a disjoint union of $n$ chains,
one chain of type $\om+(\om^*)^2$
below $n-1$ chains of type $\om^*$.)

Baldwin proves a result related to Corollary~\ref{fromperiod}.
To state it, we must first give his classification of
the finite orbits of a given $\self{f}{X_n}$ into \emph{types}.
If $o$ is in a finite orbit $O$ then $O$ has type $1$.
(Thus, any $f$ satisfying the hypotheses of
Theorem~\ref{allperiods} has an orbit of type $1$.)
On the other hand, if $o$ is not in $O$, then
$O$ has type $p$ for each period $p$ of
the partial map $\self{f_O}{[n]}$ where
$f_O(i)=j$ if $O\cap \beta_i$ is nonempty and
$f$ maps to $\beta_j$ the point in $O\cap \beta_i$ closest to $o$.
Baldwin proved that if $f$ has an orbit of size $k$
that has type $p$, then, for each $m\bo_p k$,
$f$ has a point of period $m$.
Since, for example, $x\bo_1 4\Leftrightarrow x\in\{1,2,4\}$,
the full periodicity of case $n=3$ of Corollary~\ref{fromperiod}
is not a corollary of Baldwin's type-based analysis.

\section{Proofs of Theorem \ref{allperiods} and \ref{scramble}}
\label{proofs}

\begin{definition}
Given $x,y\in X_n$, let the closed interval
$[x,y]$ denote the unique arc with endpoints $x$ and $y$.
Define open and half-open intervals as closed intervals
with appropriate points removed.
Given arcs $I, J$ of $X_n$ and $\self{g}{X_n}$,
we say that $I$ \emph{$g$-covers} $J$
and write $I\covers{g}J$ if $g(I)\supset J$.
\end{definition}

The next two propositions are fundamental properties
of star graphs that we will use without comment.

\begin{proposition}
  If $a,b\in X_n$ and $\self{g}{X_n}$,
  then $[a,b]\covers{g}[g(a),g(b)]$.
\end{proposition}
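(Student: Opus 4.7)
The plan is to exploit the fact that $X_n$ is a tree, hence uniquely arcwise connected: any connected subset of $X_n$ containing two points $x, y$ must contain the unique arc $[x,y]$. This property reduces the proposition to a short continuity argument.

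First I would observe that $[a,b]$, being an arc, is connected. By continuity of $g$, the image $g([a,b])$ is a connected subset of $X_n$. Since $g(a), g(b) \in g([a,b])$, the unique-arcwise-connectedness of $X_n$ forces $[g(a), g(b)] \subseteq g([a,b])$, which is exactly the assertion $[a,b] \covers{g} [g(a), g(b)]$.

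The only nontrivial ingredient is the tree property of $X_n$, which can be verified directly: $X_n$ is the union of $n$ arcs joined at $o$, so for any two points $x,y \in X_n$ there is a unique embedded arc from $x$ to $y$ (either contained in a single branch, or passing once through $o$). Any connected set $C \subseteq X_n$ containing $x$ and $y$ must meet every point of this arc, since otherwise removing a missing interior point would disconnect $C$ into the portion on the $x$-side and the portion on the $y$-side.

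I do not expect any real obstacle here; the proposition is essentially a restatement of the standard fact that continuous images of connected sets are connected, combined with the tree structure of $X_n$. The mild care required is only in justifying unique arcwise connectedness, which is a structural feature of star graphs.
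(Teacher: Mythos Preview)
Your argument is correct and is essentially the same as the paper's: both use that $g([a,b])$ is connected (as the continuous image of a connected set) and that $[g(a),g(b)]$ is the smallest connected subset of $X_n$ containing $g(a)$ and $g(b)$. The paper states this in one line without spelling out the unique-arcwise-connectedness justification, but the idea is identical.
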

\begin{proof}
  $g([a,b])$ is connected and
  $[g(a),g(b)]$ is the smallest connected
  superset of $\{g(a),g(b)\}$.  
\end{proof}

\begin{definition}
  Given an arc $I\subset X_n$, a \emph{compatible ordering}
  of $I$ is a linear ordering of $I$ such that
  the order topology on $I$
  equals the subspace topology inherited from $X_n$.
\end{definition}

Order each branch $\closure{\beta_i}$ of $X_n$
by the unique compatible ordering $\le_i$ such that
$o=\min(\closure{\beta_i})$.
We will omit the subscript of $\le_i$ when safe to do so.

\begin{proposition}
If $a,b,c\in X_n$
and $o\not\in(a,b)$, then $x\in[y,z]$ for some
permutation $x,y,z$ of $a,b,c$.
\end{proposition}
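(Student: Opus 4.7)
The plan is to reduce to comparing the three points along the linear orderings on the branches. First observe that the hypothesis $o\notin(a,b)$ forces $a$ and $b$ to lie in a common closed branch $\closure{\beta_i}$, in the sense that one may choose $i$ with $\{a,b\}\subset\closure{\beta_i}$. Indeed, if $a\in\closure{\beta_i}$ and $b\in\closure{\beta_j}$ with $i\neq j$ and neither equal to $o$, then every arc between them passes through $o$, so $o\in(a,b)$. Thus we may pick such an index $i$ and use the compatible ordering $\le_i$, under which $o=\min\closure{\beta_i}$.

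Next, split on whether the third point lies on the same branch. If $c\in\closure{\beta_i}$, then $a,b,c$ are all linearly ordered by $\le_i$, and the middle element (in this order) lies in the closed arc between the other two, giving the desired permutation. If instead $c\in\closure{\beta_j}$ for some $j\neq i$ with $c\neq o$, then the arc $[b,c]$ decomposes as $[b,o]\cup[o,c]$ (the former inside $\closure{\beta_i}$, the latter inside $\closure{\beta_j}$). Assuming without loss of generality $a\le_i b$, we get $a\in[o,b]=[b,o]\subset[b,c]$, so the permutation $(x,y,z)=(a,b,c)$ works.

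The only subtlety is to handle degenerate cases where some of $a,b,c$ coincide with $o$ or with each other. These are either absorbed into the above reasoning (if $a=o$ then $a\in[b,c]$ trivially for $b,c$ in different branches; if $a\le_i b=o$ then $a=o$ too since $o$ is the minimum, reducing to a trivial case) or yield the conclusion for free (if two of the points coincide, say $a=c$, then $a\in[a,b]=[c,b]$). I do not expect a real obstacle: the argument is essentially a case split using the tree structure of $X_n$ and the fact that each branch inherits a compatible linear order rooted at $o$.
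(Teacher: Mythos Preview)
Your proof is correct and follows essentially the same approach as the paper: both argue that $o\notin(a,b)$ forces $a,b$ onto a common branch, then split on whether $c$ lies on that branch, and in the off-branch case show that the $\le_i$-smaller of $a,b$ lies on the arc from the larger one to $c$. The paper's version is simply terser and omits the degenerate-case discussion.
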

\begin{proof}
  The points $a$ and $b$ must be on the same branch,
  and if $c$ is also on that branch,
  then the proposition is clearly true.
  If $c$ is not in the same branch as $a$ and $b$,
  then, letting $\{x\le y\}=\{a,b\}$, we have
  $[c,y]\supset [o,y]\supset [x,y]$.
\end{proof}

\begin{definition}
Given $\self{g}{X_n}$, by a \emph{$g$-cascade} we mean
a finite or infinite sequence of arcs
$I_0,I_1,I_2,\ldots$ such that for all $i\ge 1$ we have
$I_{i-1}\covers{g}I_i$ and $o\not\in I_i^\circ$
where $Y^\circ$ denotes the interior of $Y$.
By a \emph{$g$-loop} we mean a $g$-cascade $I_0,\ldots,I_m$
such that $I_m\supset I_0$.
\end{definition}

\begin{lemma}\label{shrink}
  If $I_0,I_1,I_2,\ldots$ is a $g$-cascade, then
  there is a descending chain of arcs
  $I_0=Q_0\supset Q_1\supset Q_2\supset\cdots$
  such that $g^i(Q_i)=I_i$ for all $i$.
\end{lemma}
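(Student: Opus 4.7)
The plan is to prove the lemma by induction on $i$, taking $Q_0=I_0$ and constructing each $Q_i$ as a sub-arc of $Q_{i-1}$. Everything reduces to the following \emph{pullback sublemma}: if $A\subset X_n$ is an arc, $h\colon A\to X_n$ is continuous, and $J\subset X_n$ is an arc with $o\notin J^{\circ}$ and $h(A)\supset J$, then some sub-arc $B\subset A$ satisfies $h(B)=J$. Given the sublemma, at step $i$ I apply it with $A=Q_{i-1}$, $h=g^i\restrict Q_{i-1}$, and $J=I_i$: the inductive hypothesis gives $g^{i-1}(Q_{i-1})=I_{i-1}$, so $g^i(Q_{i-1})=g(I_{i-1})\supset I_i$, and the sublemma produces $Q_i\subset Q_{i-1}$ with $g^i(Q_i)=I_i$, as required.

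To prove the sublemma I would use the nearest-point retraction $\pi\colon X_n\to J$, which is well-defined and continuous because $X_n$ is a tree and $J$ is a subtree. Fix compatible orderings on $A$ and $J$, with endpoints $u\le v$ on $J$, and reparametrize $A$ as $[0,1]$. Since $\pi\restrict J=\id_J$, the composite $\pi\circ h\colon[0,1]\to J$ is continuous and surjective onto $J$. Now perform the standard interval construction: set $b=\inf(\pi\circ h)^{-1}(v)$ and $a=\sup\{t\le b:(\pi\circ h)(t)=u\}$. By continuity $(\pi\circ h)(a)=u$ and $(\pi\circ h)(b)=v$, and on $(a,b)$ the map $\pi\circ h$ avoids both $u$ and $v$, so it lands in $J^{\circ}=J\setminus\{u,v\}$.

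The crucial geometric observation is that because $o\notin J^{\circ}$, every point $x\in J^{\circ}$ lies in the interior of a single branch of $X_n$, so $x$ has a neighborhood contained in $J$ and is not approached by any arc from outside $J$; hence $\pi^{-1}(J^{\circ})=J^{\circ}$. Therefore $h((a,b))\subset J^{\circ}$, and by continuity $h(a),h(b)\in\closure{J^{\circ}}=J$, and then $\pi\restrict J=\id_J$ forces $h(a)=u$ and $h(b)=v$. This gives $h([a,b])\subset J$, which combined with $u,v\in h([a,b])$ and connectedness upgrades to $h([a,b])=J$.

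The main obstacle, compared with the analogous fact for interval self-maps, is that the branching of $X_n$ makes $\pi$ highly non-injective over the endpoints of $J$: the fibers $\pi^{-1}(u)$ and $\pi^{-1}(v)$ can be entire subtrees, so one cannot deduce $h(a)=u$ from $\pi(h(a))=u$ alone. The sup/inf choice is exactly what handles this, by first trapping $h$ inside $J^{\circ}$ on $(a,b)$ and only then using continuity to pull $h(a),h(b)$ back to the endpoints. The degenerate case $u=v$ is handled by taking $B$ to be any single $h$-preimage of $u$.
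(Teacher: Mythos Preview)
Your argument is correct and shares the paper's inductive scaffold (take $Q_0=I_0$ and at step $i$ pull back through $h=g^i$), but the pullback sublemma is proved by a genuinely different mechanism. The paper chooses $Q_i=[a,b]$ \emph{minimal} among sub-arcs of $Q_{i-1}$ that $h$-cover $I_i$, and then shows $h(Q_i)=I_i$ by contradiction: if some $z\in(a,b)$ had $h(z)\notin I_i$, the three-point comparability proposition (available precisely because $o\notin I_i^\circ$) would yield a strictly smaller covering sub-arc. You instead compose with the nearest-point retraction $\pi\colon X_n\to J$, run the classical interval inf/sup construction on the real-valued map $\pi\circ h$, and then invoke the geometric fact $\pi^{-1}(J^\circ)=J^\circ$ (your way of exploiting $o\notin J^\circ$) to remove $\pi$. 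Your route is more explicit and self-contained, not depending on the earlier proposition; the paper's minimality argument is terser but leans on that auxiliary result. Both ultimately encode the same idea: since $o\notin J^\circ$, the arc $J$ has no side branches in its interior, so a path in $X_n$ cannot leave $J$ without first passing through an endpoint.

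One small oversight in your write-up: defining $b=\inf(\pi\circ h)^{-1}(v)$ first and then $a=\sup\{t\le b:(\pi\circ h)(t)=u\}$ tacitly assumes some $u$-preimage lies in $[0,b]$, which can fail (for instance if $(\pi\circ h)(0)=v$, giving $b=0$). The standard remedy is to first pick arbitrary preimages $t_u,t_v$ of $u,v$, relabel $u,v$ if necessary so that $t_u<t_v$, and then set $a=\sup\{t\le t_v:(\pi\circ h)(t)=u\}$ and $b=\inf\{t\ge a:(\pi\circ h)(t)=v\}$. With that adjustment your proof is complete.
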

\begin{proof}
  Construct $Q_0,Q_1,\ldots,Q_m,\ldots$ by recursion on $m$.
  Given $Q_{m-1}$, let $h=g^m$ and observe that
  $h(Q_{m-1})=g(I_{m-1})\supset I_m$.
  Choose $Q_m=[a,b]$ minimal among the subarcs of $Q_{m-1}$
  that $h$-cover $I_m$.
  Then $I_m=[h(a),h(b)]$ because if $I_m=[h(c),h(d)]$
  then $[c,d]$ is not a proper subinterval of $[a,b]$.
  Moreover, if $z\in(a,b)$ and $h(z)\not\in I_m$, then,
  since $o\not\in I_m^\circ$, there is a permutation $x,y$ of $a,b$
  such that $h(x)\in[h(y),h(z))$, which implies there is
  $w\in[y,z)$ such that $I_m=[h(y),h(w)]$
  in contradiction with the minimality of $Q_m$.
  Thus, $h(Q_m)=I_m$.\
\end{proof}

\begin{lemma}\label{loop}
  If $I_0,\ldots,I_m$ is a $g$-loop then
  for some $x\in I_0$ we have $g^m(x)=x$
  and $g^i(x)\in I_i$ for all $i$.
\end{lemma}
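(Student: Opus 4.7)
The plan is to combine Lemma~\ref{shrink} with a one-dimensional intermediate value argument to produce the desired fixed point of $g^m$.

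First, apply Lemma~\ref{shrink} to the $g$-cascade $I_0,\ldots,I_m$ to obtain a descending chain $I_0=Q_0\supset Q_1\supset\cdots\supset Q_m$ with $g^i(Q_i)=I_i$ for every $i\le m$. The loop hypothesis $I_m\supset I_0$ then yields
\[g^m(Q_m)=I_m\supset I_0=Q_0\supset Q_m,\]
so $g^m$ sends the arc $Q_m$ onto a superarc of itself inside $I_m$.

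Next I would produce a fixed point of $g^m$ in $Q_m$ by an intermediate value argument on the arc $I_m$. Fix a compatible ordering $\preceq$ of $I_m$ and write $Q_m=[a,b]$ with $a\preceq b$. Since $g^m(Q_m)=I_m\supset[a,b]$, there exist $u,v\in Q_m$ with $g^m(u)=a$ and $g^m(v)=b$; then $g^m(u)\preceq u$ and $v\preceq g^m(v)$. The continuous real-valued function obtained by transporting $t\mapsto g^m(t)$ through any homeomorphism $I_m\cong[0,1]$ respecting $\preceq$ and subtracting the identity changes sign between $u$ and $v$, so by the classical intermediate value theorem there is $x\in Q_m$ with $g^m(x)=x$.

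Finally, for each $i\le m$, the nesting $Q_m\subset Q_i$ gives $x\in Q_i$, whence $g^i(x)\in g^i(Q_i)=I_i$, as required. The only real subtlety is the intermediate value step: one must be careful that the argument takes place entirely within the arc $I_m$ (which is homeomorphic to $[0,1]$ via a compatible ordering), rather than treating $g^m$ as a general map into $X_n$; the constraint $Q_m\subset I_m$ coming from $I_m\supset I_0\supset Q_m$ is exactly what makes this possible. Everything else is bookkeeping once Lemma~\ref{shrink} is invoked.
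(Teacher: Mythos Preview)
Your proof is correct and follows essentially the same route as the paper: invoke Lemma~\ref{shrink} to obtain $Q_m\subset I_0\subset I_m$ with $g^m(Q_m)=I_m\supset Q_m$, identify $I_m$ with $[0,1]$ so that the Intermediate Value Theorem yields a fixed point $x\in Q_m$ of $g^m$, and then read off $g^i(x)\in g^i(Q_m)\subset g^i(Q_i)=I_i$. The paper is terser about the IVT step, but your added detail (and your remark that $Q_m\subset I_m$ is what lets the one-dimensional argument go through) is exactly the point.
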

\begin{proof}
  Let $Q_0,\ldots,Q_m$ be as in Lemma~\ref{shrink}.
  Then $g^m(Q_m)=I_m\supset I_0\supset Q_m$.
  Since $I_m$ and $Q_m$ are arcs, we may assume
  that $I_m=[0,1]$ and $Q_m=[a,b]\subset[0,1]$.
  Applying the Intermediate Value Theorem,
  $g^m$ has a fixed point $x$ in $Q_m$. 
  Finally, $g^i(x)\in g^i(Q_m)\subset g^i(Q_i)=I_i$.
\end{proof}

We prove Lemma~\ref{basicperiods} below using the well-known
(see~\cite{b} for citations)
technique of analyzing the restriction of $\covers{g}$
to pairs of minimal elements of the set of
intervals with endpoints in a fixed $g$-orbit.

\begin{definition}
  Given $\self{g}{X_n}$, a \emph{$g$-basic interval} is a
  minimal element of the set of closed intervals of the
  form $[a,b]$ where $a$ and $b$ are distinct elements
  of the $g$-orbit of $o$.
\end{definition}

In~\cite{b}, Baldwin defines ``basic intervals''
as above but assumes $g(o)=o$ and replaces
the orbit of $o$ with the union of $\{o\}$
and another fixed finite orbit.

\begin{lemma}\label{basicperiods}
  If $\self{g}{X_n}$, $m\ge 2$, and $B_{-1},B_0,B_1,B_2,\ldots, B_m$
  is a $g$-cascade of $g$-basic intervals
  such that $B_{-1}=B_0=B_m$ and
  $B_i\not=B_j$ for all $\{i<j\}\subset\{0,\ldots,m-1\}$,
  then, for all $p\ge m$, $g$ has a point of period $p$.
\end{lemma}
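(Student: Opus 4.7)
My plan is to construct, for each $p\ge m$, a $g$-loop of length $p$ and apply Lemma~\ref{loop} to obtain an $x$ with $g^p(x)=x$, then to promote $x$ to a point of minimal period $p$ by an aperiodicity argument on its itinerary word.

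I would build the loop by prepending $p-m$ copies of the self-cover $B_0\covers{g}B_0$ (available because $B_{-1}=B_0$) to the given cycle $B_0\covers{g}B_1\covers{g}\cdots\covers{g}B_m=B_0$, yielding a $g$-loop $I_0,\ldots,I_p$ with $I_j=B_0$ for $0\le j\le p-m$ and $I_{p-m+j}=B_j$ for $1\le j\le m$. Lemma~\ref{loop} then provides $x\in B_0$ with $g^p(x)=x$ and $g^i(x)\in I_i$ for all $i$.

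The key observation is that the cyclic word $I_0I_1\cdots I_{p-1}$ over the alphabet $\{B_0,\ldots,B_{m-1}\}$ is aperiodic: its $m-1$ non-$B_0$ letters form a single consecutive block in $\integers/p\integers$, and no nonzero cyclic rotation of $\integers/p\integers$ can fix a consecutive block of length $m-1<p$. So if $x$ had minimal period $q$ dividing $p$ with $q<p$, then $g^i(x)=g^{i+q}(x)\in I_i\cap I_{(i+q)\bmod p}$ for all $i$, and aperiodicity forces at least one such intersection to be a meeting of two \emph{distinct} basic intervals. By the minimality in the definition of basic intervals, any two distinct basic intervals share at most a single endpoint, which lies in the $g$-orbit of $o$; consequently the entire orbit of $x$ would lie in the $g$-orbit of $o$.

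The main obstacle is thus ruling out this degenerate case. When the $g$-orbit of $o$ is infinite it is vacuous, since any element of an infinite forward orbit has infinite forward orbit itself and so cannot be $g$-periodic. When the orbit is finite I would exploit the flexibility in Lemma~\ref{shrink} (the choice of minimal $Q_p$) together with a careful IVT-based analysis: $Q_p$ is an arc meeting the finite $g$-orbit of $o$ in only finitely many points, and $g^p|_{Q_p}$ is a continuous surjection onto the larger arc $I_p$; by examining the sign of $g^p(x)-x$ on each of the subarcs into which these finitely many points split $Q_p$, one can locate a fixed point of $g^p$ lying outside the $g$-orbit of $o$. This last step is the most delicate and is where the main care is required.
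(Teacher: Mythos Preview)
Your construction of the $g$-loop and your reduction to the case ``$x$ lies in the $g$-orbit of $o$'' are essentially what the paper does. Padding with $p-m$ copies of the self-cover $B_0\covers{g}B_0$ and invoking Lemma~\ref{loop} is exactly the paper's move, and your aperiodicity argument on the cyclic itinerary is a clean repackaging of the paper's two-case analysis (the paper splits on $q\ge m-1$ versus $q<m$ and in each case exhibits an index $i$ with $g^i(x)\in B_j\cap B_0$ for some $j\ne 0$, forcing $g^i(x)$, and hence $x$, into the orbit of $o$).

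The gap is in your final step. Your IVT plan does not work as stated: analysing the sign of $g^p(t)-t$ on the subarcs of $Q_p$ determined by orbit-of-$o$ points gives no reason for a zero \emph{off} those points. Nothing rules out $g^p\restrict Q_p$ being, say, an orientation-reversing homeomorphism of $Q_p$ onto $B_0$ whose unique fixed point happens to be an orbit-of-$o$ point. The ``flexibility in Lemma~\ref{shrink}'' does not help either, because every admissible $Q_p$ produces the same itinerary, and your own aperiodicity argument then applies to \emph{every} $g^p$-fixed point in $Q_p$. So you cannot, in general, manufacture a fixed point that avoids the orbit of $o$.

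The paper closes this case by contradiction rather than avoidance. First it sets aside the case where $p$ equals the period of $o$ (then $o$ itself has period $p$). Otherwise, once $x$ is in the orbit of $o$, every $g^i(x)$ must be an \emph{endpoint} of its basic interval $I_i$. Since $B_0$ has only two orbit-of-$o$ points (its endpoints), one argues that $x,g(x),g^2(x)$ cannot all lie in $B_0$, forcing $p-m\le 1$ and hence $q\le m$. On the other hand, each basic interval $I$ has $o\notin I^\circ$, so $\max(I)$ is well-defined, $\ne o$, and distinct basic intervals have distinct maxima; from the $m$ pairwise distinct intervals $B_0,\dots,B_{m-1}$ the paper extracts $m\le q-1$, contradicting $q\le m$. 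Replacing your IVT paragraph with this endpoint-counting argument would complete your proof along the paper's lines.
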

\begin{proof}
  Fix $p\ge m$ such that $p$ is not the period of $o$.
  Every sequence of the form
  $B_0,B_0,B_0,\ldots,B_0,B_1,B_2,B_3,\ldots,B_m$ is a $g$-loop.
  Therefore, by Lemma~\ref{loop},
  there exists $x\in B_0$ such that $g^p(x)=x$,
  $g^i(x)\in B_0$ for all $i\in[0,p-m]$, and
  $g^i(x)\in B_{i+m-p}$ for all $i\in[p-m,p]$.
  Let $q$ be the period of $x$.

  Seeking a contradiction, suppose that $q<p$.
  If $m-1\le q<p$, then $g^{p-1}(x)$ is in
  the orbit of $o$ because
  $B_{m-1}\ni g^{p-1}(x)=g^{p-q-1}(x)\in B_0$;
  if $q<m$, then $g^{p-q}(x)$ is in
  the orbit of $o$ because
  $B_{m-q}\ni g^{p-q}(x)=g^{p}(x)\in B_0$.
  Therefore, $x$ and $o$ have the same orbit.
  Since $B_0\not=B_1$, the orbit of $o$
  must have at least 3 points. 
  Therefore, $x$, $g(x)$, and $g^2(x)$ are 3 distinct points
  in the orbit of $o$ and so cannot all be endpoints of $B_0$.
  Therefore, $p-m\le 1$ and, hence, $q\le m$.

  For each basic interval $I$,
  $\max(I)$ is well-defined and not $o$.
  Moreover, $\max(I)\not=\max(J)$
  for all distinct basic intervals $I$ and $J$.
  Therefore, $m\le q-1$, in contradiction with $q\le m$.
\end{proof}

\begin{lemma}\label{genscramble}
  Suppose that $\self{g}{X_n}$, $u,v\in X_n$,
  $\le$ is a compatible ordering of $[g(u),g(v)]$
  such that $g(v)<u<v\le g(u)$, and $B_0,\ldots B_p$
  is a $g$-loop such that  $B_0=[u,v]$, $B_1\subset[g(v),u]$, 
  and $B_{p-1}$ is disjoint from $(u,v)$.
  Then $g$ is Li-Yorke chaotic.
\end{lemma}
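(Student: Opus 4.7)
The plan is to adapt Li and Yorke's scrambled-set construction from~\cite{ly}. Set $A := [u,v]$. Since $g(v) < u < v \le g(u)$ in the compatible ordering of $[g(u), g(v)]$, we have $g(A) \supset [g(u), g(v)] \supset [u,v] = A$, so $A$ $g$-covers itself. Combined with the given loop $A = B_0 \covers{g} B_1 \covers{g} \cdots \covers{g} B_p \supset A$, there are two ``moves'' that can be concatenated starting from $A$: the \emph{stay move} consisting of a single $A$, and the \emph{loop move} consisting of $A, B_1, B_2, \ldots, B_p$. For each $\alpha \in \{0,1\}^\naturals$ I form the infinite $g$-cascade $\sigma(\alpha)$ by translating each $0$ into a stay move and each $1$ into a loop move. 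Lemma~\ref{shrink} applied to $\sigma(\alpha)$ yields a nested sequence of closed subarcs of $A$; by compactness their intersection contains a point $x_\alpha$ whose $g$-orbit realizes $\sigma(\alpha)$.

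I would then extract an uncountable family $W \subset \{0,1\}^\naturals$ of the standard Li--Yorke kind: for any two distinct $\alpha, \beta \in W$, both the agreement set $\{k : \alpha_k = \beta_k\}$ and the disagreement set $\{k : \alpha_k \ne \beta_k\}$ are infinite and arranged so that the corresponding agreement and disagreement times in the cascades $\sigma(\alpha), \sigma(\beta)$ occur with controllable density. Two scrambling conditions then need verification for $S := \{x_\alpha : \alpha \in W\}$. The $\limsup > 0$ half uses the hypothesis that $B_{p-1}$ is disjoint from $(u,v)$: at a disagreement time in which $\alpha$'s schedule puts the orbit in $B_{p-1}$ while $\beta$'s keeps it in $A$, the two orbits are separated by at least the distance from $B_{p-1}$ to an inner $\eps$-core of $A$, which is positive. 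The $\liminf = 0$ half uses the self-covering $A \covers{g} A$: along a long run of shared $0$'s, the arcs $Q_i$ produced by Lemma~\ref{shrink} agree for $x_\alpha$ and $x_\beta$, and their diameters can be driven to zero by inserting sufficiently many self-covering refinements.

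The main obstacle is synchronizing these two estimates for the same pair $(\alpha,\beta)$: for $\limsup > 0$ one must simultaneously ensure that $x_\beta$ is kept away from the endpoints $\{u,v\}$ at disagreement times, while for $\liminf = 0$ the diameters $\diam(Q_i)$ must genuinely shrink along the agreement subsequence. Both requirements are met by the standard Li--Yorke device of parametrizing $W$ by a Cantor-like set of reals and placing the loop moves on a slowly increasing but suitably sparse schedule, so that the geometry transfers from the interval case essentially verbatim, with ``left'' and ``right'' of a turning point in the interval replaced here by ``inside $A^\circ$'' and ``outside $(u,v)$'' on the star graph, and the self-covering $A \covers{g} A$ playing the role of the expanding branch of the interval return map.
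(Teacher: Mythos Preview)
Your $\liminf=0$ step has a genuine gap. The claim that ``along a long run of shared $0$'s, the arcs $Q_i$ produced by Lemma~\ref{shrink} agree for $x_\alpha$ and $x_\beta$, and their diameters can be driven to zero'' is not justified and is in general false. The $Q_i$ of Lemma~\ref{shrink} are built from the entire future of the cascade, so agreement of $\sigma(\alpha)$ and $\sigma(\beta)$ on a block $[M,M+N]$ tells you nothing about the corresponding $Q_i$; read forward, all you know at those times is that $g^i(x_\alpha),g^i(x_\beta)\in A$, an interval of fixed positive diameter. The bare self-covering $A\covers{g}A$ forces no shrinking whatsoever. The same missing ingredient undermines your $\limsup>0$ step: you correctly note that $x_\beta$ must be kept away from $\{u,v\}$ at disagreement times, but nothing in your two-move itinerary does that.

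The paper supplies exactly the mechanism you are missing. From $g(v)<u<v\le g(u)$ it builds a backward orbit $v=x_0,x_1,x_2,\ldots$ in $[u,v]$ with $g(x_{m+1})=x_m$ and $x_{m+1}\in(x_{m-1},x_m)$; the $x_m$ then alternate toward a $g$-period-$2$ pair $\{a,b\}\subset(u,v)$. The intervals $A_{2i}=[b,x_{2i}]$, $A_{2i+1}=[x_{2i+1},a]$ satisfy $A_{j+1}\covers{g}A_j$ and $\diam(A_j)\to 0$. These shrinking $A_j$, not the constant interval $A$, are what the paper uses in place of your ``stay move'': descending $A_{2k},A_{2k-1},\ldots,A_0$ drives every orbit near $b$, which gives $\liminf=0$ directly, and at the disagreement times it pins one orbit deep inside $(u,v)$ (near $b$) so that disjointness of $B_{p-1}$ from $(u,v)$ yields a uniform lower bound for $\limsup$. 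Without constructing some such family of shrinking covering intervals, your sketch does not close.
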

\begin{proof}
  Inductively construct an infinite sequence
  $x_0,x_1,x_2,\ldots$ as follows.
  Let $x_0=v$ and choose $x_1\in[u,v)$ such that $g(x_1)=x_0$.
  Observe that $x_1\in(g(x_0),g(x_1))$.
  Inductively assume we have $m>0$,
  $g(x_m)=x_{m-1}$, and $x_m\in(g(x_{m-1}),g(x_m))$.
  Choose $x_{m+1}\in(x_{m-1},x_m)$ such that $g(x_{m+1})=x_m$.
  Now we have $x_{m+1}\in(x_{m-1},x_m)=(g(x_m),g(x_{m+1}))$;
  hence, the inductive hypotheses have been preserved.
  This completes the construction of $\vec{x}$.
  Next, observe that $x_1<x_0$ and $x_{m+1}\in(x_{m-1},x_m)$
  for all $m\ge 1$, so $x_1<x_3<x_5<\cdots<x_4<x_2<x_0$.
  Let $a=\lim x_{2i+1}$ and $b=\lim x_{2i}$;
  observe that $g(a)=b$ and $g(b)=a$.
  Let $A_{2i+1}=[x_{2i+1},a]$ and $A_{2i}=[b,x_{2i}]$.
  Since $g(a)=b<x_{2i}=g(x_{2i+1})$ for all $i\ge 0$
  and $g(x_{2i})=x_{2i-1}<x_{2i+1}<a=g(b)$ for all $i\ge 1$,
  we have $A_{j+1}\covers{g} A_j$ for all $j\ge 0$.

  We may assume $p$ is even,
  for we may replace $B_0,\ldots, B_p$ with
  $B_0,\ldots B_p,B_0,\ldots B_p$ without loss.
  For each real $r\in[0,1]$, choose $E_r\subset\naturals$
  with asymptotic density $r$ and define an infinite
  sequence $I_r(0),I_r(1),I_r(2),\ldots$ as the concatenation
  of the infinite sequence of finite sequences
  $\vec C_1,\vec D_1,\vec C_2,\vec D_2,\vec C_3,\vec D_3,\ldots$
  where
  \begin{align*}
    \vec C_k&=A_{2k},A_{2k-1},A_{2k-2},\ldots,A_0\\
    \vec D_k&=
    \begin{cases}
      A_{p-1},A_{p-2},A_{p-3},\ldots,A_1 &\text{if } k\in E_r \\
      B_1,B_2,B_3,\ldots,B_{p-1} &\text{if } k\not\in E_r.
    \end{cases}
  \end{align*} 
  This sequence is a $g$-cascade because:
  \begin{itemize}
  \item $A_{j+1}\covers{g} A_j$ for all $j\ge 0$.
  \item $B_j\covers{g} B_{j+1}$ for all $j<p$.
  \item $A_0=[b,v]\covers{g}[g(v),a]\supset B_1\cup A_{p-1}$.
  \item $A_1\covers{g}A_0\supset A_{2k}$.
  \item $B_{p-1}\covers{g}[u,v]\supset A_{2k}$.
  \end{itemize}
  Applying Lemma~\ref{shrink} (and compactness), choose
  $y_r\in I_r(0)$ such that $g^i(y_r)\in I_r(i)$ for all $i\ge 0$.
  
  Define a compatible metric $d$ on $X_n$ by requiring
  each branch to be isometric to $[0,1]$ and requiring
  $d(x,y)=d(x,o)+d(o,y)$ if $x$ and $y$ are on different branches.
  Since $o\not\in(u,v)$, we have
  $d(x,y)\ge\min_{w\in\{u,v\}}d(x,w)$
  for all $x\in(u,v)$ and $y\not\in(u,v)$.
  Let $\delta=\min\{d(u,b),d(b,v)\}$.
  Choose $\eps>0$ such that
  $d(x,y)<\eps$ implies $d(g(x),g(y))<\delta/2$
  for all $x\in X_n$ and $y\in\{u,v\}$.
  \begin{claim}
    Given $0\le r<s\le 1$,
    $d(g^i(y_r),g^i(y_s))\ge\eps$ infinitely often. 
  \end{claim}
  \begin{proof}
    Let $H=\{i:I_r(i)=B_{p-1}\text{ and }I_s(i)=A_1\}$,
    which is infinite.
    For each $i\in H$, we have $g^{i+1}(y_s)\in A_{2k}$ where
    $k$ is such that $i+1$ is the sum of the lengths of
    $\vec C_1,\vec D_1,\ldots,\vec C_{k-1},\vec D_{k-1}$.
    Hence, for all sufficiently large $i\in H$, we have
    \begin{align*}
      d(b,g^{i+1}(y_s))\le\delta/2
      &\Rightarrow\forall w\in\{u,v\}\ \,
      d(g^{i+1}(y_s),w)\ge\delta/2\\
      &\Rightarrow\forall z\in\{g(v),g(u)\}\ \,
      d(g^{i+1}(y_s),z)\ge\delta/2\\
      &\Rightarrow\forall w\in\{v,u\}\ \,
      d(g^i(y_s),w)\ge\eps\\
      &\Rightarrow d(g^i(y_s),g^i(y_r))\ge\eps\qedhere.
    \end{align*}
  \end{proof}
  Finally, since $\diam(A_k)\rightarrow 0$ as $k\rightarrow\infty$,
  \[\liminf_{i\rightarrow\infty}d(g^i(y_r),g^i(y_s))=0\]
  for all $r,s\in[0,1]$.
\end{proof}

\begin{proof}[Proof of Theorems~\ref{allperiods} and \ref{scramble}]
  Let $n\ge 2$, $\self{f}{X_n}$, $f(o)\in\beta_1$,
  and $f^3(o)\not\in\closure{\beta_1}$.
  There are three cases:
  \begin{enumerate}
  \item $f^2(o)\not\in\closure{\beta_1}$:
    let $u=o$, $v=f(o)$, and $B=[f^2(o),o]$.
  \item $o<_1 f(o)<_1 f^2(o)$:
    let $u=f(o)$, $v=f^2(o)$, and $B=[o,f(o)]$.
  \item $o<_1 f^2(o)<_1 f(o)$:
    let $u=f^2(o)$, $v=o$, and $B=[f(o),f^2(o)]$.
  \end{enumerate}
  In all three cases, let $A=[u,v]$ and verify
  that $A$ and $B$ are $f$-basic intervals,
  that $A\covers{f} A\covers{f} B\covers{f} A$,
  that $B\subset[f(v),u]$,
  and that $[f(u),f(v)]$ has a compatible ordering such that
  $f(v)<u<v\le f(u)$.
  By Lemmas~\ref{basicperiods} and \ref{genscramble},
  $f$ has points of all periods $\ge 2$ and is Li-Yorke chaotic.
  Since $X_n$ is a dendroid, $f$ also has a fixed point.
\end{proof}

\section{Orbits of $o$ of size $\ge n+2$}\label{extensions}
\begin{example}
  There exists $\self{f}{X_3}$ such that $o$ has
  period 5 and intersects every proper branch,
  but $f$ lacks period 3.
\end{example}
\begin{proof}
  Let $x_2=\max(\beta_2)$, $x_4=\max(\beta_3)$,
  and $o=x_0<x_1<x_3=\max(\beta_1)$.
  (See the diagram below.)
  Declare $f(x_i)=x_j$ where $j=i+1 \mod 5$.
  For convenience, we will write simply $i$ for $x_i$.
  \[\xymatrix@C=1pc@R=1pc{
    4\ar@{-}[r] & 0\ar@{-}[r] & 1\ar@{-}[r] & 3\\
    & 2\ar@{-}[u] &&
  }\]
  Then, for each minimal arc of the form $[i,j]$,
  extend $f$ to include a homeomorphism
  from $[i,j]$ to $[f(i),f(j)]$.
  To show that $f$ does not have period 3, we again
  use the method of analyzing the digraph $G$
  consisting of the restriction of $\covers{f}$ to
  pairs of $f$-basic intervals.
  $G$ is easily computed (see the diagram below),
  and its only 3-cycle is
  \[[0,1]\supset_f[0,1]\supset_f[0,1]\supset_f[0,1].\]
  \[\xymatrix@R=1pc{
    [0,1]\ar@(ul,dl)[] \ar[d] & [0,4]\ar[l]\\
    [0,2]\ar[r] & [1,3]\ar[l]\ar[u]
  }\]
  Seeking a contradiction, suppose $y\in X_3$ has period 3.
  Since the orbit of $y$ cannot intersect that of $o$,
  there exist $I_0,I_1,I_2,I_3$ in $G$ such that
  $f^i(y)\in I_i^\circ$ for all $i\le 3$.
  Moreover, $I_0,I_1,I_2,I_3$ must be an $f$-loop.
  Therefore, $0<f^i(y)<1$ for all $i$.
  But $f$ is order-reversing on $D=[0,1]\cap\inv{f}[0,1]$,
  so there are no orbits of size 3 in $[0,1]$.
\end{proof}

\begin{theorem}
  If $n\ge 3$, $\self{f}{X_n}$, and the orbit of $o$
  has size $n+2$ and intersects every proper branch,
  then $f$ is Li-Yorke chaotic and has all periods
  except possibly 3.
\end{theorem}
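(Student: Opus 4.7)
The plan is to reduce to Theorems~\ref{allperiods} and~\ref{scramble} whenever possible, and otherwise to work directly with the cover relations on the $f$-basic intervals. Since the orbit has $n+1$ points outside $\{o\}$ distributed across the $n$ proper branches with every branch hit, pigeonhole forces exactly one branch---call it $\beta_1$---to contain two orbit points, with each other branch containing exactly one. If $f^3(o)$ lies in a branch different from that of $f(o)$, then Theorems~\ref{allperiods} and~\ref{scramble} immediately give all periods and Li-Yorke chaos. Otherwise $f(o)$ and $f^3(o)$ are the two orbit points in $\beta_1$. Writing $a_i=f^i(o)$, let $c,d$ be the branch indices of $a_2,a_4$ (distinct and different from $1$ by the same pigeonhole), let $K_e\subset\closure{\beta_e}$ denote the unique basic interval in $\beta_e$ for each $e\ne 1$, and let $J_0,J_1\subset\closure{\beta_1}$ be the two basic intervals in $\closure{\beta_1}$, with $o\in J_0$. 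The argument splits on the $\le_1$-order of $a_1$ and $a_3$.

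In sub-case B1, where $a_1<_1 a_3$, we have $J_0=[o,a_1]$ and $J_1=[a_1,a_3]$. A direct computation---using that $f(J_0)=[a_1,a_2]$ and $f(J_1)=[a_2,a_4]$ each pass through $o$, that $f(K_c)=[a_1,a_3]=J_1$, and that $f(K_d)=[a_1,a_5]$ contains $[o,a_1]=J_0$ whether $a_5=o$ (for $n=3$) or $a_5$ lies in some other non-$\beta_1$ branch (for $n\ge 4$)---yields $J_0\covers{f}J_0,K_c$; $J_1\covers{f}K_c,K_d$; $K_c\covers{f}J_1$; and $K_d\covers{f}J_0$. Lemma~\ref{basicperiods}, applied to the cascade $J_0,J_0,K_c,J_1,K_d,J_0$ whose four intermediate terms are pairwise distinct, gives all periods $\ge 4$. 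Period $2$ comes from the $f$-loop $J_1,K_c,J_1$ via Lemma~\ref{loop}: the resulting $x\in J_1$ has $f(x)\in K_c$, and $J_1\cap K_c=\vn$ forces minimal period $2$. Period $1$ is free because any self-map of the dendroid $X_n$ has a fixed point. For Li-Yorke chaos, Lemma~\ref{genscramble} applies with $u=o$, $v=a_1$, the compatible ordering on $[a_1,a_2]=[f(u),f(v)]$ in which $a_2<o<a_1$, and the $f$-loop $J_0,K_c,J_1,K_d,J_0$: its second term $K_c$ sits in $[a_2,o]=[f(v),u]$, and its penultimate term $K_d$ is disjoint from $(o,a_1)=(u,v)$.

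Sub-case B2, where $a_3<_1 a_1$, runs analogously, with $J_0=[o,a_3]$ and $J_1=[a_3,a_1]$; the analogous covers are $J_0\covers{f}J_0,J_1,K_d$; $J_1\covers{f}K_c,K_d$; $K_c\covers{f}J_1$; and $K_d\covers{f}J_0,J_1$. Here the $3$-cascade $J_0,J_0,J_1,K_d,J_0$ is legal for Lemma~\ref{basicperiods} and delivers all periods $\ge 3$, so B2 in fact produces period $3$ as a bonus (still consistent with the theorem); period $2$ comes from $J_1,K_c,J_1$ as in B1; period $1$ from the dendroid fixed point; and Lemma~\ref{genscramble} applies with $u=o$, $v=a_3$, the compatible ordering $a_4<o<a_3<a_1$ on $[a_1,a_4]$, and the short $f$-loop $J_0,K_d,J_0$. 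The main technical point requiring care throughout is the unified verification that $f(K_d)$ contains $J_0$ (and, in B2, also $J_1$) regardless of whether $a_5=o$ or $a_5$ lies in yet another non-$\beta_1$ branch, since this is what closes up the $4$-cascade in B1 and the $3$-cascade in B2; once that is recorded, everything else reduces to a finite check on the cover digraph over $\{J_0,J_1\}\cup\{K_e:e\ne 1\}$ and direct applications of Lemmas~\ref{loop} and~\ref{genscramble}.
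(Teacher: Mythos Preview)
Your proof is correct and follows essentially the same route as the paper's: reduce to Theorems~\ref{allperiods} and~\ref{scramble} unless $f(o),f^3(o)\in\beta_1$, then split on the $\le_1$-order of $a_1$ and $a_3$ and analyze the cover digraph on the same basic intervals (your $J_0,J_1,K_c,K_d$ are the paper's $A,B_1,B_2,B_3$). The only substantive difference is in your sub-case B2 (the paper's Case~1, $a_3<_1 a_1$): the paper observes directly that $J_0\covers{f}J_0\covers{f}K_d\covers{f}J_0$, an $m=2$ cascade yielding all periods $\ge 2$ at once, whereas you use the longer $m=3$ cascade $J_0,J_0,J_1,K_d,J_0$ and then recover period $2$ separately from $J_1,K_c,J_1$. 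Both are valid; the paper's version is slightly more economical. One cosmetic point: where you write ``$f(J_0)=[a_1,a_2]$'' and similar, you mean $f(J_0)\supset[a_1,a_2]$; only the containment is guaranteed and only the containment is needed.
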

\begin{proof}
  We may assume $f(o)\in\beta_1$. 
  By Theorem~\ref{allperiods},
  we may assume also $f^3(o)\in\beta_1$.
  Therefore, the orbit of $o$ intersects $\beta_1$
  at exactly $f(o)$ and $f^3(o)$
  and intersects each other proper branch at exactly one point.
  In particular, $f^5(o)\not\in\beta_1$ and
  we may assume that $f^2(o)\in\beta_2$ and $f^4(o)\in\beta_3$. 
  There are two cases:
  \begin{enumerate}
  \item\label{threeinner} $o<_1 f^3(o)<_1 f(o)$:
    let $u=o$, $v=f^3(o)$ and $B_1=[o,f^4(o)]$.
  \item\label{threeouter} $o<_1 f(o)<_1 f^3(o)$:
    let $u=o$, $v=f(o)$, $B_1=[o,f^2(o)]$, $B_2=[f(o),f^3(o)]$,
    and $B_3=[o,f^4(o)]$.
  \end{enumerate}
  In both cases, let $A=[u,v]$.
  In Case~\ref{threeinner},
  $A\covers{f} A\covers{f} B_1\covers{f} A$.
  In Case~\ref{threeouter},
  $A\covers{f} A\covers{f} B_1\covers{f} B_2\covers{f} B_3\covers{f} A$.
  Therefore, by Lemma~\ref{basicperiods},
  $f$ has points of all periods $\ge 2$ in Case~\ref{threeinner}
  and points of all periods $\ge 4$ in Case~\ref{threeouter}.
  Since $X_n$ is a dendroid, $f$ also has a fixed point.
  Moreover, in Case~\ref{threeouter},
  $B_1\covers{f} B_2\covers{f} B_1$, which, by Lemma~\ref{loop},
  implies $x\in B_1$ such that $f(x)\in B_2$ and $f^2(x)=x$.
  Since $B_1$ and $B_2$ are disjoint, any such $x$ has period 2.

  In both Case 1 and Case 2, $B_1=[f(v),u]$ and
  $[f(u),f(v)]$ has a compatible ordering such that
  $f(v)<u<v\le f(u)$.
  By Lemma~\ref{genscramble}, $f$ is Li-Yorke chaotic.
\end{proof}

\begin{example}\label{justevens}
  There exists Li-Yorke chaotic $\self{f}{X_3}$ such that
  $o$ has period 6 and intersects every proper branch
  but the periodicity of $f$ is $\{1\}\cup 2\naturals$.
\end{example}
\begin{proof}
  Let $x_2=\max(\beta_2)$, $x_4=\max(\beta_3)$,
  and $o=x_0<x_1<x_3<x_5=\max(\beta_1)$.
  (See the diagram below.)
  Declare $f(x_i)=x_j$ where $j=i+1 \mod 5$.
  For convenience, we will write simply $i$ for $x_i$.
  \[\xymatrix@C=1pc@R=1pc{
    4\ar@{-}[r] & 0\ar@{-}[r] & 1\ar@{-}[r] & 3\ar@{-}[r] & 5\\
    & 2\ar@{-}[u] &&&
  }\]
  Then, for each minimal arc of the form $[i,j]$,
  extend $f$ to include a homeomorphism
  from $[i,j]$ to $[f(i),f(j)]$.
  Like in our previous example,
  to show that a given $y\in X_3$ does not
  have a given odd period $p\ge 3$, we analyze the digraph $G$:
  \[\xymatrix@R=1pc{
    [0,1]\ar@(ul,dl)[] \ar[r] & [0,2]\ar[r] & \ar[l] [1,3]\ar[r] & \ar[l] [0,4]\ar[r] & \ar[l] [3,5]
  }\]
  Since the orbit of $y$ cannot intersect that of $o$,
  there is an $f$-loop $I_0,\ldots,I_p$
  of elements of $G$ such that
  $f^i(y)\in I_i^\circ$ for all $i\le p$.
  But all odd cycles of $G$ are of the form
  $[0,1],\ldots,[0,1]$, so $0<f^i(y)<1$ for all $i$.
  But $f$ is order-reversing on $D=[0,1]\cap\inv{f}[0,1]$,
  so there are no odd orbits in $[0,1]$ except fixed points.

  It now suffices to show that $g=f^2$
  is Li-Yorke chaotic and has full periodicity.
  By Lemmas~\ref{basicperiods} and \ref{genscramble},
  this is indeed the case:
  letting $u=0$, $v=2$, $A=[u,v]$, and $B=[4,0]$,
  we have $A\covers{g} A\covers{g} B\covers{g} A$,
  $B=[g(v),u]$ and $g(u)=v$.
\end{proof}

\section{Open problems}
We should not be surprised that
$\le_{X_m}$ is weaker than $\le_{X_n}$ for $m\le n$
because, choosing a retraction
$r\colon X_n\rightarrow X_m$ and letting
$\iota\colon X_m\rightarrow X_n$ be its unique right inverse,
we have, for all $\self{g}{X_m}$ and $p\ge 0$,
that $\self{\iota\circ g\circ r}{X_n}$ and
$(\iota\circ g\circ r)^p=\iota\circ g^p\circ r$.
On the other hand, it is natural to wonder if
other interesting weakenings $\le_{\mc{F}}$ of $\le_{X_n}$
can be found by restricting to various sets $\mc{F}$ of maps
$\self{f}{X_n}$ not of the form $\iota\circ g\circ r$ above.
(To be precise, $p\le_\mc{F} q$ means that
every $f\in\mc{F}$ with a point of period $q$
also has a point of period $p$.)

An obvious candidate for $\mc{F}$ is the set
$\mc{T}_n$ of $\self{f}{X_n}$
with an orbit intersecting every proper branch.

\begin{problem}
  Characterize $\le_{\mc{T}_n}$.
\end{problem}

\begin{problem}
  If $f\in\mc{T}_n$ is witnessed by 
  the orbit of $o$ intersecting every proper branch,
  then what does the period of $o$ imply about the
  set of all periods of $f$?
\end{problem}

Theorem~\ref{allperiods} can be interpreted as a
modest partial solution to these problems.
Moreover, conjectured answers to the second problem
can be tested computationally if we limit
the size of the orbit of $f$ to, say, at most $10$.
Then an exhaustive computer search for when the conditions
of Lemma~\ref{basicperiods} are satisfied
by an interand of $f$ becomes quite feasible.
It would also then be feasible to automate
a search for absent digraph cycle lengths
like in the examples of section~\ref{extensions}.
In fact, the $f$ of Example~\ref{justevens}
is in one of only 24 classes of $f\in\mc{T}_3$ where
a 6-point orbit of $o$ hits every branch
yet Theorem~\ref{allperiods} does not apply.
Manual analysis of 24 digraphs shows that the
periodicity is always cofinite or $\{1\}\cup 2\naturals$.

We are also interested in proving Li-Yorke chaos
from larger orbits of $o$.

\begin{problem}
  If $f\in\mc{T}_n$ is witnessed by the orbit of $o$
  intersecting every proper branch, and the orbit
  of $o$ has cardinality in $[n+3,\infty)$,
  then is $f$ Li-Yorke chaotic?
\end{problem}

For small orbit sizes, we can exhaustively search
for small iterands $g$ of $f$
and $g$-loops $B_0,\ldots, B_p$ that
satisfy the hypotheses of Lemma~\ref{genscramble} where
the endpoints of $B_0,\ldots,B_p$ come from the orbit of $f$.
For orbits of $o$ size 6 in $X_3$ that hit every branch,
there are only 24 cases not covered by Theorem~\ref{scramble}.
Manual analysis reveals that Lemma~\ref{genscramble}
applies to $f$ or to $f^2$ in every case.

\end{document}